\documentclass[11pt]{article}

\usepackage[T1]{fontenc}
\usepackage{lmodern}
\usepackage{microtype}
\usepackage{amsmath,amssymb,amsthm,mathtools}
\usepackage{geometry}
\usepackage{booktabs}
\usepackage{enumitem}
\usepackage{multicol}
\usepackage{float}
\usepackage{tikz}
\usepackage{listings}
\usepackage[hidelinks]{hyperref}

\allowdisplaybreaks
\newtheorem{theorem}{Theorem}[section]
\newtheorem{lemma}[theorem]{Lemma}
\newtheorem{proposition}[theorem]{Proposition}

\newtheorem{definition}[theorem]{Definition}
\newtheorem{remark}[theorem]{Remark}
\newtheorem{example}[theorem]{Example}
\newtheorem{observation}[theorem]{Observation}

\newcommand{\Z}{\mathbb Z}
\newcommand{\Zi}{\mathbb Z[i]}
\newcommand{\U}{\mathrm U}
\newcommand{\V}{\mathrm V}
\newcommand{\cC}{\mathcal C}
\newcommand{\cR}{\mathcal R}
\newcommand{\abs}[1]{\lvert#1\rvert}

\numberwithin{equation}{section}

\def\PointData{%
15/-8,-1/-20,-2/-21,16/-9,15/3,16/-8,%
15/-4,-2/-22,16/-3,17/-8,15/0,16/-4,%
16/-1,15/-2,16/-6,-1/-22,15/-1,17/-5,%
17/-6,4/-18,18/-7,20/-5,19/-5,18/-6,%
19/-6,1/-23,1/-22,6/-16,2/-22,2/-20,%
2/-19,1/-19,20/-2,20/-3,21/-4,21/-3,%
10/15,8/15,9/15,-21/-5,4/-21,5/-19,%
9/16,4/-20,22/-3,4/-19,22/-2,7/-17,%
-20/-5,23/-2,-19/-5,6/-17,6/-19,-21/-2,%
4/-16,6/-15,23/1,24/1,8/17,25/0,%
23/-1,13/-10,4/-15,24/0,5/-15,24/-1,%
11/-13,7/15,5/15,5/20,7/18,6/16,%
1/15,3/17,4/18,5/18,10/-13,5/16,%
14/-11,5/-14,4/-13,11/-11,10/-12,9/-12,%
5/-13,2/16,4/20,5/17,4/21,8/-11,%
9/-11,7/-12,2/21,2/23,5/-12,3/16,%
3/22,0/25,5/-11,7/-11,0/23,6/-10,%
-2/23,-1/23,-22/-2,-23/-3,-1/24,0/20,%
-1/20,-1/21,-2/21,-1/15,-3/22,-3/21,%
-2/16,-2/15,-4/21,-3/19,-4/20,-16/-10,%
6/-11,9/-6,10/-6,7/-9,-16/-9,6/-9,%
4/-9,8/-6,-5/19,6/-8,-4/19,-4/16,%
-4/15,-7/18,-6/19,-6/16,-3/15,-6/18,%
-6/15,5/-8,-8/17,-19/-6,-18/-6,-15/-8,%
-23/-2,-24/-2,-8/15,5/-7,4/-6,-7/15,%
-18/7,8/-4,-17/6,-24/0,-15/10,-5/17,%
-18/-7,4/-7,-17/-5,-14/-8,-24/1,-16/-5,%
-9/16,-25/-1,-23/2,-24/-1,-10/15,-9/15,%
-14/11,-23/1,-14/8,7/-4,-10/13,-11/13,%
-16/-7,-9/13,-15/-6,-11/14,-14/10,-13/8,%
-13/10,-13/9,-16/-6,-14/-6,-14/-4,-8/-11,%
-13/-5,-8/12,-12/-4,-9/11,7/-5,4/-5,%
-9/12,-12/9,-11/10,-9/10,-9/-11,-9/9,%
-11/9,-13/-4,5/-4,5/-5,-10/9,-10/8,%
-11/7,-10/7,-10/6,-10/-7,-11/-7,-11/-5,%
-11/-4,6/-5,-9/6,-10/-3,-6/-9,-10/-4,%
-8/7,-7/-10,-9/-4,-8/-5,-9/-5,-6/-8,%
-7/-5,-8/6,-8/4,-4/8,-5/-8,-5/-7,%
-7/4,-7/-3,-6/4,-7/6,-6/5,-3/7,%
-5/6,-6/-3,-6/-5,-5/5,-2/6,-4/-6,%
-3/5,-4/-7,-6/-4,-5/-4,-4/-5,5/-3,%
-3/6,-4/4,-4/-4,-3/2,-3/4,-2/3,%
-3/3,-4/-2,-3/-2,-1/4,-3/-4,1/3,%
-2/-3,-3/-3,-1/3,0/3,0/2,5/-2,%
4/-2,4/0,3/-1,-1/-4,1/-3,-1/-3,%
0/-3,0/-2,-1/1,-1/-1,0/1,0/-1,%
1/0,2/0,0/0,2/-1,4/-1%
}

\title{A Salem--Spencer-Type Construction for Large Subsets of Integer Grids with No Isosceles Right Triangles}
\author{%
Gyula K\'arolyi\thanks{%
  HUN--REN Alfr\'ed R\'enyi Institute of Mathematics
  Budapest, Hungary; and
  Department of Algebra and Number Theory,
E\"otv\"os University, 
Budapest,  Hungary.
{\it Email address}: {\tt karolyi.gyula@renyi.hu}
}%
\and
J\'ozsef Solymosi\thanks{%
Department of Mathematics, University of British Columbia,
Vancouver, BC, Canada; and
\'Obuda University, Budapest, Hungary.
{\it Email address}: {\tt solymosi@math.ubc.ca}
}%
}
\date{}

\begin{document}
\maketitle

\begin{abstract}
Let $F(n)$ be the largest size of a subset of $\{0,1,\ldots,n-1\}^2$
containing no nondegenerate isosceles right triangle.  We give a modified
Salem--Spencer-type construction over the Gaussian integers showing that
$F(n)=\Omega(n^{1.3})$. The best known upper bound is
$F(n)\ll n^2/(\log n)^{1+c}$ for some absolute constant $c>0$, so there is
still a large gap between the bounds.
\end{abstract}

\section{Introduction}
Finding the largest subset of a $d$-dimensional integer grid avoiding a given
configuration is an important problem. Such questions appear in Ramsey Theory,
Ergodic Theory, Number Theory, Harmonic Analysis, Discrete Geometry and
Combinatorics (see, e.g., \cite{Prendiville, ShkredovSolymosi}). The one-dimensional case, in which the forbidden
configuration is a 3-term arithmetic progression, is the classical problem of
estimating $r_3(N)$, the largest size of a subset of $\{1,\ldots,N\}$
containing no nontrivial 3-term arithmetic progression.
The breakthrough work of Kelley and Meka
\cite{KelleyMeka}, together with the self-contained exposition and subsequent
refinement of Bloom and Sisask
\cite{BloomSisaskExposition,BloomSisaskImprovement}, gave quasipolynomial upper
bounds.  The strongest currently known estimate, due to Raghavan
\cite{Raghavan}, is
\[
  r_3(N)\le
  N\exp\!\left(-c\left(\frac{\log N}{\log\log N}\right)^{1/6}\right)
\]
for some absolute constant $c>0$.
The lower estimate $r_3(N)\ge N\exp(-c'\sqrt{\log N})$, still best known
apart from the value of $c'$, is an old result of Behrend \cite{Behrend}.
A classical construction, showing that there are large subsets of
$\{0,1,\ldots,n-1\}$ without 3-term arithmetical progression is due to Salem
and Spencer \cite{SalemSpencer}.  Their construction introduced a digit-based
way to build very large such sets: one chooses a base and a digit
range for which the relevant linear equation is carry-free, so that a global
configuration reduces to coordinatewise constraints.  We use the same
philosophy for isosceles right triangles (IRT) in the integer lattice,
with Gaussian-integer digits and introduce a finite peeling order in place of
the usual order on integer digits.

Denote by $F(n)$ the maximum size of an IRT-free subset of
$\{0,1,\ldots,n-1\}^2$. Pilatte's extension of the Bloom--Sisask method to
translation-invariant equations with matrix coefficients gives, for some
absolute constant $c>0$,
\[
  F(n)\ll \frac{n^2}{(\log n)^{1+c}};
\]
see \cite{Pilatte}. In particular, $F(n)=o(n^2)$.
From the other direction, even a superlinear lower bound is not obvious.
To establish the promised bound, we identify $(x,y)\in\Z^2$ with $x+iy\in\Zi$.  The construction has four
ingredients.  First, we encode an isosceles right triangle by a linear equation
in $\Zi$.  Second, the digit alphabet is placed in a region where this equation
has no nonzero carries. Finding such an IRT-free alphabet of size greater than
$n$ for some small value of $n$ is enough to obtain
$F(n)=\Omega(n^{\alpha})$ for some
$\alpha>1$ in the spirit of a recursive construction of Szekeres mentioned
in \cite{ErdosTuran}, but we can go further
and allow isosceles right triangles to appear in the alphabet.
Thus, we equip the alphabet with a peeling order
that eliminates the digits one by one.  
This is probably the most novel ingredient of our technique, where 
right-angled vertices play a role analogous to middle terms of 3-term
arithmetic progressions in the construction of Salem and Spencer.
Finally, we restrict the words to a large composition class, which allows
the peeling order to propagate through every digit position.

In the next section we exhibit a general construction, illustrated with the
smallest nontrivial example.
In Section \ref{sec:alphaevolve} we apply it to the AlphaEvolve-assisted
$281$-point certificate.  AlphaEvolve \cite{AlphaEvolve}
is an evolutionary coding agent that uses
programmatic evaluators to guide search; here it serves only
as a discovery mechanism, while the resulting mathematical statement is
checked independently by the certificate in the appendices.

\section{The ingredients of the peeling construction}

Identify the lattice point $(x,y)\in\Z^2$ with the Gaussian integer
$x+iy\in\Zi$.

\begin{observation}
\label{lem:irt-equation}
The lattice points $a,b,c\in\Zi$ form a possibly degenerate
isosceles right triangle
in this clockwise orientation with right angle at vertex $b$
if and only if they satisfy the equation
\begin{equation}
\label{eq:irt}
 a+ic=(1+i)b.
 \end{equation}
Moreover, the only degenerate solutions of \eqref{eq:irt} satisfy $a=b=c$,
corresponding to degenerate triangles.
\end{observation}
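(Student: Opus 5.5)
The plan is to rewrite \eqref{eq:irt} as a statement about the two legs at the candidate right-angle vertex $b$ and read off the geometry from multiplication by $i$. Subtracting $(1+i)b$ and regrouping, \eqref{eq:irt} is equivalent to
\[
  a-b \;=\; ib-ic \;=\; -i\,(c-b).
\]
Under the identification $(x,y)\leftrightarrow x+iy$, multiplication by $-i$ is rotation of $\Z^2$ by $90^\circ$ clockwise about the origin. So \eqref{eq:irt} says exactly that the vector $a-b$ is the vector $c-b$ rotated clockwise by a right angle.

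For the forward direction, assume \eqref{eq:irt} holds. Then $\abs{a-b}=\abs{c-b}$, since $\abs{-i}=1$. Moreover $a-b$ and $c-b$ are perpendicular, because $(a-b)\overline{(c-b)}=-i\abs{c-b}^2$ is purely imaginary. Hence, whenever $c\neq b$, the points $a,b,c$ form an isosceles right triangle with right angle at $b$ and equal legs $ba$, $bc$.

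For the converse, suppose $a,b,c$ form such a triangle with right angle at $b$. Then $a-b$ is $c-b$ rotated by $\pm 90^\circ$, i.e.\ $a-b=\pm i(c-b)$. Fixing the clockwise orientation in the statement selects the sign $-i$, which gives back \eqref{eq:irt}. The only real point to pin down here is the orientation convention. I will fix it by declaring that ``clockwise orientation'' means $a$ is obtained from $c$ by a clockwise quarter-turn about $b$, which is what the displayed identity encodes. The other orientation corresponds to swapping the roles of $a$ and $c$, i.e.\ to the equation $c+ia=(1+i)b$.

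For degeneracy, I will show that any coincidence among $a,b,c$ forces all three to be equal.
\begin{itemize}[leftmargin=*]
  \item If $c=b$ or $a=b$, the identity $a-b=-i(c-b)$ immediately gives the other equality, since $-i$ is a unit.
  \item If $a=c$, the identity becomes $a-b=-i(a-b)$, i.e.\ $(1+i)(a-b)=0$. Since $\Zi$ is an integral domain and $1+i\neq 0$, this gives $a=b$.
\end{itemize}
Thus every solution with a repeated point has $a=b=c$, and every other solution is a nondegenerate isosceles right triangle. Nothing here is a genuine obstacle; the only care needed is the orientation bookkeeping in the converse.
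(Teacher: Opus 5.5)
Your proof is correct and follows the paper's argument. Both rewrite \eqref{eq:irt} as $a-b=-i(c-b)$, a clockwise quarter-turn about $b$ taking $c$ to $a$, and both observe that any coincidence among $a,b,c$ forces all three to be equal; your perpendicularity computation and case-by-case degeneracy check just spell out details the paper leaves implicit.
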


\begin{proof}
Clockwise rotation by $\pi/2$ around $b$ moves the point $c$ to $a$
if and only if
\[
  a-b=-i(c-b),
\]
which is a rearrangement of Equation \ref{eq:irt}.
If in a solution to \eqref{eq:irt} any two of $a,b,c$ coincide, the
equation forces all three to coincide.
\end{proof}

The number $\beta\in\Zi$ is a nontrivial element if it is neither 0 nor a
unit in $\Zi$. Equivalently, $|\beta|>1$.  
Let $\beta$ be a nontrivial element of $\Zi$.
A set $P\subset\Zi$ is a partial system of residues modulo $\beta$, if
$a-b\not\in\beta\Zi$ for arbitrary distinct elements $a,b\in P$.
In particular, $|P|\le N(\beta)=|\beta|^2$. We refer to such a set as an
alphabet, whose elements can be used as digits for unique expansion
of Gaussian integers in base $\beta$ under some mild conditions.

\begin{definition}
\label{def:carry-free}
Let $\beta\in\Zi$ be a nontrivial element and let $P\subset\Zi$ be
a partial system of residues modulo $\beta$.  We say that the alphabet
$P$ is \emph{IRT-carry-free in base $\beta$} if 
\begin{equation}
\label{eq:carry}
a+ic-(1+i)b\in\beta\Zi,\quad a,b,c\in P \implies a+ic=(1+i)b.
\end{equation}
\end{definition}

\noindent
For example, the partial system of residues $P=\{0,1,2\}$ modulo $\beta$
is not an IRT-carry-free alphabet either in base $\beta=2+i$
or in base $\beta=2+2i$, for in the first case $a=0$, $b=2$, $c=1$
is a solution of Equation \ref{eq:carry} modulo $\beta$,
but is not the solution of the equation itself, and the same holds
in the second case with the degenerate triple $a=c=0$, $b=2$.
On the other hand, $P=\{0,1,i\}$ is an IRT-carry-free alphabet modulo $2+2i$, see
Example \ref{ex:1stnontrivial} below.
To prove that our constructions are IRT-free, we will use 
Condition \ref{eq:carry} to check Equation \ref{eq:irt} digitwise.

For an ordering $p_1,\ldots,p_q$ of the elements of an alphabet $P$, put
\[
  \cR_t=\{p_t,p_{t+1},\ldots,p_q\}.
\]

\begin{definition}
\label{def:peeling}
The ordering $p_1,\ldots,p_q$
is an \emph{IRT-peeling order} of $P$ if, for every
$t\in\{1,\ldots,q\}$, the only solution of
\begin{equation}
  \label{eq:peeling}
  a+ic=(1+i)p_t,
  \qquad a,c\in\cR_t,
 \end{equation}
is $a=c=p_t$. Such an ordering is also referred to as a peeling sequence.
\end{definition}

\noindent
Thus, there is no nondegenerate isosceles triangle in
the remaining suffix $\cR_t$ with a right-angle at vertex $p_t$.
The whole alphabet need not itself be IRT-free.

\begin{example}
\label{ex:1stnontrivial}
{\rm Let $\beta=2+2i$. The set $P=\{0,1,i\}$ is a partial system of
  residues modulo $\beta$, since $|a-b|\le \sqrt{2}<|\beta|$ for any
  $a,b\in P$. The three digits of this alphabet form an
  isosceles right triangle,
  the only nondegenerate solution of \eqref{eq:irt} being $a=1$, $b=0$, $c=i$.
The digit $1$ is not a right-angle vertex of any triangle in the full set, so it
can be peeled first. Once $1$ is removed, triangle $abc$ disappears,
so $0$ can be peeled next, followed by $i$. Thus $1,0,i$
is an IRT-peeling order of $P$; in fact, it can be equipped with
four different IRT-peeling orders.
The alphabet $P$ is also IRT-carry-free.
To verify Condition \ref{eq:carry}, let $a,b,c\in\{0,1,i\}$ and
$\delta=a+ic-(1+i)b$. It is enough to check that $|\delta|<|\beta|=2\sqrt{2}$.
If one of $a,b,c$ is equal to 0, then the triangle inequality yields
$|\delta|\le 1+\sqrt{2}$.
Otherwise two of them coincide and $|\delta|\le 2$. As we shall see right after the next theorem, this simple example gives a non-trivial bound already.}
\end{example}

\begin{remark}
 \label{rem:squarefree}
 {\rm No finite set that contains the vertices of a square can be equipped
    with an IRT-peeling order. The 8-point configuration depicted in
    Figure \ref{fig:squarefree} must also be excluded for the same reason:
    Each point is a vertex of an IRT within the configuration. There are arbitrarily large
    point sets which are not peelable, but after removing any vertex, the set becomes peelable.

\begin{figure}[h]
\centering
\begin{tikzpicture}[x=1cm,y=1cm]
  \draw[gray!70] (0,0) grid (2,3);

  \foreach \x/\y in {
    0/0, 1/0,
    0/1, 2/1,
    0/2, 2/2,
    0/3, 1/3}
    \fill (\x,\y) circle (2.2pt);
\end{tikzpicture}
\caption{A square-free forbidden configuration}
\label{fig:squarefree}
\end{figure}
}

\end{remark}

\begin{theorem}
\label{thm:general-construction}
Let $\beta\in\Zi$ be a nontrivial element, and let $P$ be an
IRT-carry-free alphabet in base $\beta$ with an IRT-peeling order
$p_1,\ldots,p_q$.  For every integer $m\ge 1$, there is a set
$A_m\subset\Zi$ contained in a circle of radius $\ll \beta^{m}$,
which does not contain any nondegenerate isosceles right triangle and satisfies
\[
\abs{A_m}\gg \frac{q^m}{m^{(q-1)/2}}.
\]
Consequently,
\begin{equation}
  F(n)\gg
  \frac{n^{\log q/\log\abs\beta}}{(\log n)^{(q-1)/2}}.
  \label{eq:general-asymptotic}
\end{equation}
The constants implied in the Vinogradov symbols depend only on $\beta$ and $P$.
\end{theorem}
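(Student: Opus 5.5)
We take $A_m$ to be the set of Gaussian integers $\sum_{j=0}^{m-1} d_j\beta^j$ whose digit word $(d_0,\ldots,d_{m-1})\in P^m$ lies in a single composition class. Concretely, we fix counts $(k_1,\ldots,k_q)$ with $\sum k_t=m$ and each $k_t\in\{\lfloor m/q\rfloor,\lceil m/q\rceil\}$, and require that digit $p_t$ occur exactly $k_t$ times. Since $P$ is a partial system of residues modulo $\beta$, the map from words to Gaussian integers is injective: if two expansions agree as numbers, reducing modulo $\beta$ gives equal lowest digits, and we subtract, divide by $\beta$ and induct. Hence $|A_m|$ equals the multinomial coefficient $m!/(k_1!\cdots k_q!)$. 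By Stirling, this is $\gg q^m m^{-(q-1)/2}$, with a constant depending only on $q$. Every element has modulus at most $\max_P|p|\cdot\sum_{j<m}|\beta|^j\ll|\beta|^m$, because $|\beta|\ge\sqrt2>1$.

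\textbf{Reduction to digitwise identities.} Suppose $a,b,c\in A_m$ satisfy \eqref{eq:irt}, with digits $a_j,b_j,c_j$. Reducing modulo $\beta$ gives $a_0+ic_0-(1+i)b_0\in\beta\Zi$, so Condition \eqref{eq:carry} yields $a_0+ic_0=(1+i)b_0$ exactly. Subtracting these terms and dividing by $\beta$ reduces to the same equation for the shifted expansions. By induction, $a_j+ic_j=(1+i)b_j$ holds for every $j$. Carry-freeness is exactly what makes this step clean.

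\textbf{Peeling argument.} This is the heart of the proof and the step I expect to need the most care. We show by induction on $t$ that at every position $j$ with $b_j=p_t$ we also have $a_j=c_j=p_t$. Suppose this holds for $p_1,\ldots,p_{t-1}$.
\begin{itemize}
\item Let $S$ be the set of positions where $b_j\in\{p_1,\ldots,p_{t-1}\}$. By the induction hypothesis, $a$ and $c$ agree with $b$ on $S$.
\item All three words have the same composition. Therefore, outside $S$, each of $a$ and $c$ uses the digits $p_1,\ldots,p_{t-1}$ exactly as often as $b$ does, namely zero times. So $a_j,c_j\in\cR_t$ for every $j\notin S$.
\item At a position with $b_j=p_t$ we have $a_j+ic_j=(1+i)p_t$ with $a_j,c_j\in\cR_t$. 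By Definition \ref{def:peeling}, this forces $a_j=c_j=p_t$.
\end{itemize}
After $q$ steps every position is covered, so $a=b=c$. By Observation \ref{lem:irt-equation}, the triangle is then degenerate, and hence $A_m$ is IRT-free.

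The subtle point is that a single composition class is used for all three words. That is what lets the counting argument push the peeled digits out of the remaining positions, and it is why we restrict to one class rather than take all of $P^m$.

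\textbf{Consequence for $F(n)$.} Given $n$, we choose the largest $m$ with $C|\beta|^m\le n/2$, where $C$ absorbs the radius constant. We translate $A_m$ into $\{0,\ldots,n-1\}^2$; translation preserves IRT-freeness. Then $q^m\gg n^{\log q/\log|\beta|}$ and $m\ll\log n$, which gives \eqref{eq:general-asymptotic}. All implied constants depend only on $\beta$ and $P$.
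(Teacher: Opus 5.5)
Your proof is correct and follows the paper's argument step for step: a base-$\beta$ expansion restricted to a single balanced composition class, injectivity and the digitwise identities $a_j+ic_j=(1+i)b_j$ from the residue and carry-free conditions, and a peeling induction in which equal compositions force $a_j,c_j\in\cR_t$ wherever $b_j=p_t$. The only cosmetic difference is that the paper's induction hypothesis is the equality of position sets $N_i(a)=N_i(b)=N_i(c)$ for $i<t$, whereas you carry the implication $b_j=p_i\Rightarrow a_j=c_j=p_i$ and recover the rest from the digit counts, which is equivalent.
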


\begin{proof}
For a word $w=(w_0,\ldots,w_{m-1})\in P^m$, define
\[
  \Phi(w)=\sum_{j=0}^{m-1}w_j\beta^j \in \Zi.
\]
The first condition in Definition~\ref{def:carry-free}, followed inductively
from the least significant digit upward, shows that $\Phi$ is injective.

The composition of $w$ is the vector
\[
  \nu(w)=(\nu_1(w),\ldots,\nu_q(w)),
\]
where $\nu_t(w)$ counts the coordinates equal to $p_t$:
$\nu_t(w)=|N_t(w)|$, where $N_t(w)\subseteq [0,m-1]$
denotes the set of positions the digit $p_t$ occupies in $w$.
There are exactly $\binom{m+q-1}{q-1}$ possible compositions.
Hence some composition class $W_m\subset P^m$ satisfies
\[
  \abs{W_m}\ge \frac{q^m}{\binom{m+q-1}{q-1}}.
  \]
Using Stirling's approximation formula to estimate the size of
a most popular composition class, belonging to a composition with nearly
equal coordinates, yields the refinement
\[
  \abs{W_m}\gg \frac{q^m}{m^{(q-1)/2}}.
  \]
Set $A_m=\Phi(W_m)$.

If $A_m$ contains the vertex set of an isosceles right triangle, then
in view of Observation \ref{lem:irt-equation}, Equation \ref{eq:irt}
has a nondegenerate solution in $A_m$.
Suppose that $A+iC=(1+i)B$ holds for some $A,B,C\in A_m$. We claim that $A=B=C$.
The words $a=(a_j)$, $b=(b_j)$, $(c=c_j)$ corresponding to $A$, $B$, and $C$,
respectively, satisfy
\[
  \sum_{j=0}^{m-1}
  \bigl(a_j+ic_j-(1+i)b_j\bigr)\beta^j=0.
\]
Reduction modulo $\beta$ and Condition \ref{eq:carry} give
$a_0+ic_0=(1+i)b_0$. Dividing by $\beta$ and iterating yields
\begin{equation}
  a_j+ic_j=(1+i)b_j
  \qquad (0\le j<m).
  \label{eq:digitwise}
\end{equation}

To prove that the three words coincide coordinatewise
we apply complete induction for $t=1,\dots,q$ by peeling off
the digits $p_i$ one by one as follows.
Assume that for some $1\le t <q$, $N_i(a)=N_i(b)=N_i(c)=N_i$ holds for every
$i<t$. We show that the assumption, void if $t=1$, implies
$N_t(a)=N_t(b)=N_t(c)$.
Let $w\in \{a,b,c\}$. If $w_j=p_t$, then
\[
j\in \{0,1,\dots,m-1\} \setminus (N_1\cup \dots \cup N_{t-1}).
\]
Whenever $b_j=p_t$, Equations \ref{eq:digitwise} and
\ref{eq:peeling} force $a_j=c_j=p_t$. Thus, 
$\nu_t(a)=\nu_t(b)=\nu_t(c)$ implies $N_t(a)=N_t(b)=N_t(c)$.
The inductive step thus completed, the claim follows by taking $t=q$.

Finally, let $M=\max_{p\in P}\abs p$. Every element of $A_m$ has modulus at
most
\[
  M\sum_{j=0}^{m-1}\abs\beta^j
  =M\frac{\abs\beta^m-1}{\abs\beta-1}.
\]
$A_m$ therefore lies in an $N\times N$ integer grid, where
$N\le K\abs\beta^m$ for a constant $K$ depending only on $\beta$ and $P$.
A suitable translation of $A_m$ is an IRT-free subset of 
$\{0,1,\ldots,n-1\}^2$ for any $n\ge N$.
For any given sufficiently large $n$, choose the largest $m$ with
$K\abs\beta^m\le n$. Then $q^m\gg n^{\log q/\log\abs\beta}$ and
$m^{(q-1)/2}\ll (\log n)^{(q-1)/2}$.
This proves \eqref{eq:general-asymptotic}.
\end{proof}

\noindent
Applied to the construction in Example \ref{ex:1stnontrivial},
Theorem~\ref{thm:general-construction} gives
\[
  F(n)\gg \frac{n^{\alpha}}{\log n},
  \quad \text{where} \quad
   \alpha=\frac{\log3}{\log(2\sqrt2)}
        =\frac{2\log3}{3\log2}
        \approx1.05664.
\]

It is not difficult to verify that this is the smallest meaningful
construction
that can produce an exponent $\alpha$ strictly larger than $1$ through
Theorem~\ref{thm:general-construction}.

\section{A specific scheme}
\label{sec:specscheme}

In view of Theorem \ref{eq:general-asymptotic}, the goal is to optimize
$\log q/\log\abs\beta$ for a peelable IRT-carry-free alphabet $P$ of size $q$
in a suitable Gaussian base $\beta$. To ensure that $P$ is indeed a partial
system of residues modulo $\beta$, we locate $P$ in a fundamental domain
$\Omega$ of the lattice $\beta\Zi$. 
To avoid carries, we study 
well-shaped regions $\cC$ inside $\Omega$ that satisfy 
the geometric condition that the Minkowski-sum of
$\cC$ and $i\cC$ is disjoint from $(1+i)\cC+\beta\Zi$. Thus, every subset
of such a carry-free region can serve as an IRT-carry-free alphabet. 

\begin{example}
\label{ex:2ndnontrivial}
{\rm Let $\beta=6+5i$, then $N(\beta)=61$.
  Consider the square with vertices $O(0,0)$, $A(6,5)$, $B(1,11)$, $C(-5,6)$,
  its midpoint is $K(1/2,11/2)$. The 60 gridpoints inside the square, together
  with $A$, form a fundamental domain $\Omega$ for $\beta\Zi$, the 
  15 gridpoints inside triangle $OAK$, together with $A$, form a carry-free
  region $\cC\subseteq \Omega$. One readily sees that
  \[
(1,1),\ (1,2),\ (2,2),\ (2,3),\ (3,3),\ (3,4),\ (4,4),\ (4,5),\ (5,5),\ (6,5)
  \]
  is a peeling sequence of length $q=10$ in $\cC$.
An application of Theorem~\ref{thm:general-construction} gives a bound
\begin{equation}
  \label{eq:1.12024}
  F(n)\gg \frac{n^{\alpha}}{(\log n)^{4.5}},
  \quad \text{where} \quad
   \alpha=\frac{\log q}{\log\abs\beta}
        =\frac{\log100}{\log61}
        \approx1.12024.
\end{equation}
}
\end{example}
    
\begin{remark}
  \label{rem:limit}
    {\rm Such a staircase-like IRT-carry-free peeling  sequence of length
      $q=2k$ in base $\beta=(k+1)+ki$ can be constructed for any positive
      integer $k$. Since the function ${\log(4k^2)}/{\log(2k^2+2k+1})$ attains
      its maximum value for positive integers at $k=5$, they cannot be used
      to obtain a better lower bound.
  }
\end{remark}

Experimenting with different bases and IRT-carry-free regions we found that
the most likely candidates to obtain a good bound are those Gaussian integers
$\beta$ in which the real and imaginary parts are nearly equal, together
with a carry-free region that satisfies $\cC\approx \Omega/2$ when the
fundamental domain $\Omega$ is located centrally.
To capture the condition
\[
(\cC +i\cC)\cap \left((1+i)\cC+\beta\Zi\right) = \emptyset
\]
algebraically, introduce the diagonal coordinates
\[
  \U(z)=x+y,
  \qquad
  \V(z)=x-y.
\]
for $z=x+iy\in\Zi$. For each positive integer $k$, consider a Gaussian
integer $\beta$ of the form
\[
\beta=(2k+1)+(2k+1)i.
\]
Then
\begin{equation}
  z\in\beta\Zi
  \quad\Longleftrightarrow\quad
  \U(z)\equiv0\pmod{4k+2}
  \ \text{and}\ 
  \V(z)\equiv0\pmod{4k+2}.
  \label{eq:beta-divisibility}
\end{equation}
A centrally located fundamental domain for $\Zi/(\beta)$ is
\[
  \Omega=
  \{z\in\Zi:-2k-1\le\U(z)\le 2k,\ -2k-1 \le\V(z)\le 2k\}.
\]

Inside $\Omega$ consider the smaller diamond
\begin{equation}
  \cC=
  \{z\in\Zi:-k-1 \le\U(z)\le k,\ -k\le\V(z)\le k\}.
  \label{eq:carry-free-diamond}
\end{equation}

\begin{lemma}
\label{lem:carry-free}  
The region $\cC$ is IRT-carry-free in base $\beta$.
\end{lemma}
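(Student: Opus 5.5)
The plan is to verify Condition~\eqref{eq:carry} directly in the diagonal coordinates $\U,\V$, using the divisibility criterion \eqref{eq:beta-divisibility}. First note that $\cC$ is a partial system of residues modulo $\beta$. Indeed, $\cC\subseteq\Omega$ and $\Omega$ is a fundamental domain. Alternatively, for $z,w\in\cC$ both $\U(z-w)$ and $\V(z-w)$ have absolute value at most $2k+1<4k+2$, so by \eqref{eq:beta-divisibility} $z-w\in\beta\Zi$ forces $\U(z-w)=\V(z-w)=0$, hence $z=w$. The map $z\mapsto(\U(z),\V(z))$ is injective on $\Zi$, since $x=(\U+\V)/2$ and $y=(\U-\V)/2$; this is used repeatedly.

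The key step is to record how $\U$ and $\V$ transform under the two multiplications appearing in \eqref{eq:irt}. For $z=x+iy$ we have $iz=-y+ix$, so
\[
\U(iz)=\V(z),\qquad \V(iz)=-\U(z).
\]
Similarly $(1+i)z=(x-y)+i(x+y)$, so
\[
\U((1+i)z)=\U(z)+\V(z),\qquad \V((1+i)z)=\V(z)-\U(z).
\]
Both $\U$ and $\V$ are additive. Hence for $a,b,c\in\cC$ and $\delta=a+ic-(1+i)b$ we get
\[
\U(\delta)=\U(a)+\V(c)-\U(b)-\V(b),\qquad
\V(\delta)=\V(a)-\U(c)+\U(b)-\V(b).
\]

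Now bound each term using $-k-1\le\U\le k$ and $-k\le\V\le k$ on $\cC$. For $\U(\delta)$ the four summands lie in $[-k-1,k]$, $[-k,k]$, $[-k,k+1]$ and $[-k,k]$, so $-4k-1\le\U(\delta)\le 4k+1$. For $\V(\delta)$ the summands lie in $[-k,k]$, $[-k,k+1]$, $[-k-1,k]$ and $[-k,k]$, so again $\abs{\V(\delta)}\le 4k+1$.

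The asymmetric choice $-k-1\le \U$ is exactly compensated: the extra $-1$ on $\U$ appears once with each sign in each coordinate. This bookkeeping of the asymmetric endpoints is the only delicate point, and I would double-check it carefully.

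Finally, suppose $\delta\in\beta\Zi$. By \eqref{eq:beta-divisibility}, both $\U(\delta)$ and $\V(\delta)$ are multiples of $4k+2$. Both have absolute value at most $4k+1$, so both vanish, and injectivity of $(\U,\V)$ gives $\delta=0$, that is, $a+ic=(1+i)b$. This is precisely Condition~\eqref{eq:carry}, so $\cC$, and hence every subset of $\cC$, is IRT-carry-free in base $\beta$.
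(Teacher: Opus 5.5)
Your proposal is correct and follows the paper's proof essentially step for step. You use the same transformation rules for $\U$ and $\V$ under multiplication by $i$ and $1+i$, and the same bound $-4k-1\le \U(\delta),\V(\delta)\le 4k+1$. You then conclude via \eqref{eq:beta-divisibility} exactly as the paper does. Your explicit check that $\cC$ is a partial system of residues is a harmless addition that the paper leaves implicit in $\cC\subseteq\Omega$.
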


\begin{proof}
To check Condition \ref{eq:carry},
consider $\delta=a+ic-(1+i)b$ for some $a,b,c\in \cC$.
The identities
\[
  \U(iz)=\V(z),\qquad \V(iz)=-\U(z),
\]
and
\[
  \U((1+i)z)=\U(z)+\V(z),
  \qquad
  \V((1+i)z)=\V(z)-\U(z)
\]
give
\begin{align*}
  \U(\delta)&=\U(a)+\V(c)-\U(b)-\V(b),\\
  \V(\delta)&=\V(a)-\U(c)-\V(b)+\U(b).
\end{align*}
The bounds in \eqref{eq:carry-free-diamond} imply
\[
  -4k-1\le\U(\delta)\le 4k+1,
  \qquad
  -4k-1\le\V(\delta)\le 4k+1.
\]
If $\delta\in\beta\Zi$, Equation \eqref{eq:beta-divisibility} forces both
coordinates to be divisible by $4k+2$, hence both vanish. This
implies $\delta=0$.
\end{proof}

\section{The AlphaEvolve-optimized 281-point construction}
\label{sec:alphaevolve}

To find a good construction, we used AlphaEvolve, an AI coding agent developed
by Google DeepMind that uses large language models and evolutionary
computation to automatically design, evaluate, and optimize complex algorithms. 
Increasing the modulus of $\beta$, first the points of $P$ tend to be
concentrated next to the boundary of its convex hull in a zig-zagging
fashion, like in Example \ref{ex:2ndnontrivial}, then points start to
appear inside the the convex hull with increasing density.

Set $\beta=51+51i$. According to Lemma \ref{lem:carry-free}, the region
\[
\cC=  \{z\in\Zi:-26 \le\U(z)\le 25,\ -25\le\V(z)\le 25\}
\]
is IRT-carry-free in base $\beta$. 
Appendix~\ref{app:points} gives an ordered list
$p_1,\ldots,p_{281}$ of points in $\cC$.
The list was obtained by an AlphaEvolve-assisted search and is plotted in
Figure~\ref{fig:points}. The order, rather than only the underlying point
set $P=\{p_1,\ldots,p_{281}\}$
is the finite certificate needed by the peeling argument.

\begin{figure}[H]
\centering
\begin{tikzpicture}[x=0.105cm,y=0.105cm]
  \draw[->] (-28,0) -- (28,0) node[right] {$x$};
  \draw[->] (0,-28) -- (0,28) node[above] {$y$};
  \draw[thick]
    (-25.5,-0.5) -- (-0.5,-25.5) -- (25,0) -- (0,25) -- cycle;
  \foreach \x/\y in \PointData {
    \fill (\x,\y) circle[radius=0.55pt];
  }
  \node[above right] at (17,16) {$\cC$};
\end{tikzpicture}
\caption{The $281$ Gaussian digits in the carry-free diamond $\cC$.
}
\label{fig:points}
\end{figure}

\begin{proposition}
\label{prop:certificate}
The ordered list in Appendix~\ref{app:points} consists of $281$ distinct points
of $\cC$ and is an IRT-peeling order.  Its canonical SHA-256 digest is
\begin{center}
\ttfamily\small
9ebeedde004c4d30a1da52f5f8fae4f148fee3d524f540c7e35369f192e83724.
\end{center}
\end{proposition}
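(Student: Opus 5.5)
This is a finite certificate, so the proof is a direct, mechanically checkable verification of three properties of the list $p_1,\ldots,p_{281}$ in Appendix~\ref{app:points}. The SHA-256 digest is not a mathematical claim. It only fixes the exact list being certified, and it is confirmed by hashing a canonical serialization of the list: say the comma-separated pairs $x/y$ in the given order with no whitespace.

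\textbf{Distinctness and membership.} First I check that the 281 pairs are pairwise distinct, for instance by sorting them or inserting them into a hash set. Then, for each $p_t=x+iy$, I check that $-26\le x+y\le 25$ and $-25\le x-y\le 25$. This places every point in $\cC$. By Lemma~\ref{lem:carry-free}, $P$ is then IRT-carry-free in base $\beta=51+51i$. In particular $P$ is a partial system of residues modulo $\beta$, since $\cC\subseteq\Omega$.

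\textbf{The peeling property.} For each $t$, Definition~\ref{def:peeling} requires that no $a,c\in\cR_t$ other than $a=c=p_t$ satisfy $a+ic=(1+i)p_t$. Given $a$, this equation determines $c$ uniquely, via $c=-i\bigl((1+i)p_t-a\bigr)=(1-i)p_t+ia$. So for each $t$ I would:
\begin{itemize}
\item loop over $a\in\cR_t\setminus\{p_t\}$;
\item compute $c=(1-i)p_t+ia$, i.e.\ rotate $a$ by $\pi/2$ about $p_t$;
\item check that $c\notin\cR_t$.
\end{itemize}
By Observation~\ref{lem:irt-equation}, $a=p_t$ forces $c=p_t$, so that case needs no check. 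Maintaining $\cR_t$ as a set from which $p_1,\ldots,p_{t-1}$ have been deleted, the whole check takes $O(281^2)$ constant-time lookups. Equivalently, in geometric terms, no point still present is the image of another still-present point under a quarter-turn about $p_t$.

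\textbf{Main obstacle.} The only real difficulty is that this is a computation too large to present by hand, about $4\times10^4$ checks. The proof must therefore rest on an independently reproducible program, presumably the one in the appendices. I would make that program as transparent as possible, using exact integer arithmetic and the closed-form $c=(x_t+y_t-y_a,\;y_t-x_t+x_a)$ for $p_t=(x_t,y_t)$ and $a=(x_a,y_a)$. For robustness I would also implement an independent cross-check. One option is to enumerate all triples $(a,b,c)\in P^3$ satisfying \eqref{eq:irt} and verify that for each nondegenerate triple, $b$ is peeled strictly before at least one of $a$ and $c$.
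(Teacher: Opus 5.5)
Your main verification is correct and is the paper's own finite check. For each $t$, equation \eqref{eq:peeling} determines one endpoint from the other, so it suffices to test whether the partner lies in $\cR_t$, alongside the count, distinctness and membership checks. The paper loops over $c$ and tests $a=(1+i)p_t-ic$; you loop over $a$ and test $c=(1-i)p_t+ia$, which is equivalent. The paper's verifier also re-checks residues and carries directly, whereas you get them from Lemma~\ref{lem:carry-free}; neither is needed for the proposition itself.

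Two side remarks are wrong as written, however.

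First, your optional cross-check has the order reversed. At step $t$ with $b=p_t$, the peeling condition says $a$ and $c$ are not both in $\cR_t$. So for every nondegenerate solution of \eqref{eq:irt} in $P$, at least one of $a,c$ must be peeled strictly \emph{before} $b$; that is, $b$ is not the first of the three. Your condition, that $b$ is peeled before at least one of $a,c$, instead says $b$ is not the \emph{last} of the three. This is neither necessary nor sufficient: in Example~\ref{ex:1stnontrivial} it accepts the invalid order $0,1,i$ and rejects the valid order $1,i,0$.

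Second, the stated digest refers to the specific serialization fixed in Appendix~\ref{app:verification}: the tokens \texttt{(x,y)} joined by single spaces, followed by one newline. Hashing comma-separated $x/y$ pairs, as you propose, hashes a different byte string and cannot reproduce the stated value.
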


\begin{proof}
This is a finite deterministic check. For each $t$, form
$\cR_t=\{p_t,\ldots,p_{281}\}$. For each $c\in\cR_t$, Equation
\eqref{eq:peeling} determines the only possible endpoint
\[
  a=(1+i)p_t-ic.
\]
The verifier checks that $a\notin\cR_t$ unless $a=c=p_t$. It also checks the
point count, distinctness, membership in $\cC$, distinct residues modulo
$\beta$, and every possible carry. The complete verifier and its
expected output appear in Appendix~\ref{app:verification}.
\end{proof}

Theorem \ref{thm:general-construction} applied to this construction
yields or main result.

\begin{theorem}
\label{thm:main}
For every integer $n>1$,
\[
  F(n)\gg \frac{n^\alpha}{(\log n)^{140}},
  \quad \text{where} \quad
  \alpha=\frac{\log281}{\log\abs{51+51i}}
        =\frac{2\log281}{\log5202}
        \approx1.317865485534.
\]
In particular, $F(n)\ge n^{\alpha-o(1)}$.
\end{theorem}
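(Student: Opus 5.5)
The plan is to apply Theorem~\ref{thm:general-construction} directly, with $\beta=51+51i$ and the alphabet $P=\{p_1,\ldots,p_{281}\}$ of Appendix~\ref{app:points}. This requires three inputs. First, $P$ must be a partial system of residues modulo $\beta$. Second, $P$ must be IRT-carry-free in base $\beta$. Third, the listed order must be an IRT-peeling order.

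The second input follows from Lemma~\ref{lem:carry-free} with $k=25$. For this $k$ we have $\beta=(2k+1)+(2k+1)i=51+51i$, and the diamond \eqref{eq:carry-free-diamond} is exactly the region $\cC$ of Section~\ref{sec:alphaevolve}. Condition \eqref{eq:carry} is inherited by subsets, so it holds for $P\subseteq\cC$.

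For the first input, note that $\cC\subseteq\Omega$, where $\Omega$ is the fundamental domain described before \eqref{eq:carry-free-diamond}. It is a fundamental domain because \eqref{eq:beta-divisibility} shows that $\U$ and $\V$ taken modulo $4k+2$ determine the class. The points of $\Omega$ are therefore pairwise incongruent modulo $\beta$. Proposition~\ref{prop:certificate} independently checks this distinctness, and it also supplies the third input: the $281$ points are distinct, lie in $\cC$, and form an IRT-peeling order.

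We then read off the exponents. With $q=281$ and $\abs\beta=\sqrt{2\cdot 51^2}=\sqrt{5202}$, we get
\[
\frac{\log q}{\log\abs\beta}=\frac{2\log 281}{\log 5202}=\alpha .
\]
The logarithmic loss has exponent $(q-1)/2=140$. Substituting into \eqref{eq:general-asymptotic} gives $F(n)\gg n^\alpha/(\log n)^{140}$ for all sufficiently large $n$.

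Two details remain. The theorem claims the bound for every $n>1$, and Theorem~\ref{thm:general-construction} only yields it for $n$ large. For the finitely many small $n>1$ we use $F(n)\ge 1$ and enlarge the implied constant, which is legitimate because $n^\alpha/(\log n)^{140}$ is finite for each fixed $n>1$. Finally, $(\log n)^{140}=n^{o(1)}$, which gives $F(n)\ge n^{\alpha-o(1)}$.

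The only genuinely nontrivial step is the peeling certificate, Proposition~\ref{prop:certificate}. It is a finite computation, of order $281^2$ checks, and we take it as given. Everything else is bookkeeping, so the main point of care is matching the parameters $k=25$ and $\beta=51+51i$ exactly to Lemma~\ref{lem:carry-free}.
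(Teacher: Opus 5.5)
Your proposal is correct and matches the paper's argument. The paper likewise obtains Theorem~\ref{thm:main} by combining Lemma~\ref{lem:carry-free} with $k=25$ and the certificate of Proposition~\ref{prop:certificate}, then applying Theorem~\ref{thm:general-construction} with $q=281$ and $\abs\beta=\sqrt{5202}$; your extra remarks are sound bookkeeping that the paper leaves implicit, namely the partial-residue property via $\cC\subseteq\Omega$ and absorbing the finitely many small $n$ into the implied constant.
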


\begin{remark}
{\rm The adjective ``optimized'' refers to the computational search objective.
  The point list and verifier certify the feasibility of the $281$-digit
  construction
and hence Theorem~\ref{thm:main}. They do not constitute an upper-bound
certificate proving that $281$ is the maximum possible alphabet size in
$\cC$, nor that this base is globally optimal among all Gaussian bases.}
\end{remark}

\section{Concluding remarks}

The proof separates a geometric argument from a finite search. The geometric
part supplies a carry-free residue region for a Gaussian base; the finite part
finds a large peelable alphabet inside that region. This separation makes the
result independently checkable and suggests several extensions: different
Gaussian bases, asymmetric carry-free regions, block digits, and exact
optimization methods capable of producing upper-bound certificates in addition
to large feasible alphabets.

\bigskip
\noindent
{\bf Acknowledgements.}
J.S. was supported by an NSERC Discovery Grant and by the Excellence 151341 
grant by the National Research, Development, and Innovation Office of Hungary.
We thank the Google DeepMind team for providing access to AlphaEvolve, which was used to assist in algorithm discovery and optimization for this work.

\bigskip
\noindent
{\bf Declaration.}
There are no relevant financial or non-financial competing interests to report.

\appendix

\section{The ordered 281-point certificate}
\label{app:points}

The $t$th item below is the Gaussian digit $p_t=x+iy$ represented by the pair
$(x,y)$. This order is part of the certificate.

\begingroup
\scriptsize
\setlength{\columnsep}{1.2em}
\begin{multicols}{4}
\begin{enumerate}[leftmargin=*,label=\arabic*.,itemsep=0pt,parsep=0pt,topsep=0pt]
\foreach \x/\y in \PointData {\item $(\x,\y)$}
\end{enumerate}
\end{multicols}
\endgroup

\section{Standalone verification}
\label{app:verification}

The following Python program reads the machine-readable \verb|\PointData| macro
directly from this TeX source. Save it as
\verb|verify_alphaevolve_281.py| next to the source and run
\begin{center}
\verb|python verify_alphaevolve_281.py irt_peeling_paper.tex|.
\end{center}
The canonical digest is computed from the space-separated sequence
\verb|(x,y)| with no spaces inside pairs and with one terminal newline.

\begin{lstlisting}
#!/usr/bin/env python3
"""Verify the 281-point Gaussian-digit certificate embedded in the TeX source."""

from __future__ import annotations

import hashlib
import math
import re
import sys
from pathlib import Path

EXPECTED_COUNT = 281
EXPECTED_SHA256 = "9ebeedde004c4d30a1da52f5f8fae4f148fee3d524f540c7e35369f192e83724"

Point = tuple[int, int]


def load_points(tex_path: Path) -> list[Point]:
    text = tex_path.read_text(encoding="utf-8")
    match = re.search(r"\\def\\PointData\{(.*?)\}", text, flags=re.S)
    if match is None:
        raise ValueError("Could not find \\def\\PointData{...} in the TeX source")
    return [
        (int(x), int(y))
        for x, y in re.findall(r"(-?\d+)\s*/\s*(-?\d+)", match.group(1))
    ]


def add(z: Point, w: Point) -> Point:
    return z[0] + w[0], z[1] + w[1]


def sub(z: Point, w: Point) -> Point:
    return z[0] - w[0], z[1] - w[1]


def mul_i(z: Point) -> Point:
    return -z[1], z[0]


def mul_1_plus_i(z: Point) -> Point:
    return z[0] - z[1], z[0] + z[1]


def diagonal(z: Point) -> Point:
    x, y = z
    return x + y, x - y


def residue_mod_beta(z: Point) -> Point:
    # beta = 51+51i. A residue is determined by U(z),V(z) modulo 102.
    u, v = diagonal(z)
    return u % 102, v % 102


def canonical_digest(points: list[Point]) -> str:
    payload = " ".join(f"({x},{y})" for x, y in points) + "\n"
    return hashlib.sha256(payload.encode("ascii")).hexdigest()


def verify(points: list[Point]) -> None:
    assert len(points) == EXPECTED_COUNT
    assert len(set(points)) == EXPECTED_COUNT
    assert canonical_digest(points) == EXPECTED_SHA256

    # Membership in the carry-free diamond C.
    for z in points:
        u, v = diagonal(z)
        assert -26 <= u <= 25
        assert -25 <= v <= 25

    # Distinct digit residues modulo beta.
    by_residue = {residue_mod_beta(z): z for z in points}
    assert len(by_residue) == EXPECTED_COUNT

    # Direct O(q^2) check of all possible one-digit carries.
    # For fixed b,c, a carry can occur only for the unique digit a having
    # the same beta-residue as (1+i)b-ic, if such a digit exists.
    for b in points:
        for c in points:
            target = sub(mul_1_plus_i(b), mul_i(c))
            a = by_residue.get(residue_mod_beta(target))
            if a is not None:
                delta = sub(add(a, mul_i(c)), mul_1_plus_i(b))
                assert delta == (0, 0)

    # Suffix-peeling certificate.
    for t, p in enumerate(points):
        remaining = set(points[t:])
        rhs = mul_1_plus_i(p)
        for c in remaining:
            a = sub(rhs, mul_i(c))
            assert a not in remaining or (a == p and c == p)


def main() -> None:
    default_tex = Path(__file__).with_name("irt_peeling_paper.tex")
    tex_path = Path(sys.argv[1]) if len(sys.argv) > 1 else default_tex
    points = load_points(tex_path)
    verify(points)
    rho = math.log(len(points)) / math.log(abs(complex(51, 51)))
    print(f"points: {len(points)}")
    print(f"sha256: {canonical_digest(points)}")
    print("membership: verified")
    print("digit residues: verified")
    print("one-digit carry check: verified")
    print("peeling order: verified")
    print(f"rho: {rho:.15f}")


if __name__ == "__main__":
    main()
\end{lstlisting}

The expected output is
\begin{lstlisting}[language={}]
points: 281
sha256: 9ebeedde004c4d30a1da52f5f8fae4f148fee3d524f540c7e35369f192e83724
membership: verified
digit residues: verified
one-digit carry check: verified
peeling order: verified
rho: 1.317865485534210
\end{lstlisting}

\end{document}